\newtheorem{theorem}{Theorem}[section]
\newtheorem{lemma}[theorem]{Lemma}
\theoremstyle{definition}
\newtheorem{definition}[theorem]{Definition}
\newtheorem{example}[theorem]{Example}
\numberwithin{equation}{section}
\def\co{\colon\thinspace}
\DeclareMathOperator{\Leib}{Leib}
\DeclareMathOperator{\alg}{alg\langle}
\DeclareMathOperator{\lat}{\mathcal{L}}
\begin{document}
\title{Lattices of subalgebras of Leibniz algebras}
\author{Donald W. Barnes}
\address{1 Little Wonga Rd, Cremorne NSW 2090 Australia}
\email{donwb@iprimus.com.au}
\subjclass[2010]{Primary 17A32}
\keywords{Leibniz algebras, lattices}

\begin{abstract}  I describe the lattice of subalgebras of a one-generator Leibniz algebra.  Using this, I show that, apart from one special case, a lattice isomorphism $\phi \co \lat(L) \to \lat(L')$ between Leibniz algebras $L,L'$maps the Leibniz kernel $\Leib(L)$ of $L$ to $\Leib(L')$.
\end{abstract}

\maketitle

\section{Introduction} \label{sec-intro}
A lot is known about the lattices of subalgebras of Lie algebras. See for example, Barnes \cite{LIsos}, \cite{Lautos},   Goto \cite{Goto}, Towers \cite{T-dimpres}, \cite{T-Lautos}, \cite{smod}. In this paper, I look at some basic results needed to extend these results to Leibniz algebras.

In the following, $L,L'$ are finite-dimensional (left) Leibniz algebras over the field $F$.  I denote by $\langle a, b, \dots\rangle$ the subspace spanned by the listed elements and by $\alg a,b, \dots \rangle$ the subalgebra they generate.  The Leibniz kernel of $L$ is the subspace $\Leib(L) = \langle x^2 \mid x \in L\rangle$ spanned by the squares of the elements of $L$.  By Barnes \cite[Lemma 1.1]{Sch-Leib}, it is a $2$-sided ideal and $L/\Leib(L)$ is a Lie algebra.

Since the main aim of this paper is to show that (apart from one exceptional case,) $\Leib(L)$ can be recognised from lattice properties alone, we say of a subalgebra $U$ of $L$, that $U$ is \textit{recognisable} if, from properties of the lattice $\lat(L)$, it can be shown that $U \subseteq \Leib(L)$, that is, if for every lattice isomorphism $\phi \co \lat(L) \to \lat(L')$, we have $\phi(U) \subseteq \Leib(L')$. 

There is one case in which $\Leib(L)$ is not recognisable.
\begin{example} \label{diamond} Let $L = \alg a \rangle = \langle a, a^2 \rangle$ with $a^3 = a^2$.  Put $b = a - a^2$ and $v = a^2$.  Then $b^2 = v^2 =vb = 0$ and $bv = v$.  Then $\Leib(L) = \langle v \rangle$ and $\lat(L)$ is

\begin{center}
\setlength{\unitlength}{1mm}
\begin{picture}(16,10)(-5,0)
\put(5,10){\circle*{1}} \put(0,5){\circle*{1}} \put(10,5){\circle*{1}} \put(5,0){\circle*{1}} 
\put(0,5){\line(1,1){5}} \put(5,0){\line(1,1){5}}
\put(0,5){\line(1,-1){5}} \put(5,10){\line(1,-1){5}}
\put(-5,4){$\langle b \rangle$}
\put(11,4){$\langle v \rangle$}
\end{picture}
\end{center}
The map $\phi \co \lat(L) \to \lat(L)$ which interchanges $\langle b \rangle$ and $\langle v \rangle$ is clearly a lattice automorphism.
\end{example}

The lattice of Example \ref{diamond} will be called the \textit{diamond lattice} and an algebra with this as its subalgebra lattice will be called a \textit{diamond algebra.}  The lattice of all subspaces of a vector space over $F$ will be called a \textit{vector space lattice}.

\begin{lemma}  Let $D$ be a diamond algebra.  Then there exist $b,v \in D$ such that $D = \langle b,v\rangle$ and $b^2 = v^2 = vb = 0$ and $bv = v$.  
\end{lemma}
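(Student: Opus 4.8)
The plan is to prove that every diamond algebra is generated by a single element and is two-dimensional, and then to recover its multiplication table from the Leibniz identity together with the requirement that it possess exactly two one-dimensional subalgebras.

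First I would record the lattice data. Write $\lat(D) = \{0, A, B, D\}$ with $A, B$ the two incomparable intermediate subalgebras; then $A$ and $B$ are atoms of $\lat(D)$ and $A \cap B = 0$. For nonzero $x \in D$, $\alg x\rangle$ is a nonzero member of $\lat(D)$, and if $x \in A$ then $\alg x\rangle \subseteq A$ forces $\alg x\rangle = A$ since $A$ is an atom (similarly for $B$). Choosing nonzero $a \in A$, $c \in B$ and putting $z = a + c$, we get $z \notin A$ (else $c = z - a \in A \cap B = 0$) and likewise $z \notin B$, so $\alg z\rangle \notin \{0,A,B\}$ and hence $\alg z\rangle = D$.

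Next I would show $\dim D = 2$. I use the standard consequence of the left Leibniz identity that $x^2y = 0$ for all $x,y$; combined with the cited fact that $\Leib(D)$ is an ideal spanned by squares, this gives $\Leib(D)\,D = 0$, so $z^i\,D = 0$ for all $i \ge 2$ because $z^i \in \Leib(D)$. Hence every product of copies of $z$ of length $\ge 2$ is either $0$ or a left-normed power $z^k = zz^{k-1}$, so $\alg z\rangle = \langle z, z^2, z^3, \dots\rangle$, and the subspace $K := \langle z^2, z^3, \dots\rangle$ satisfies $KK = 0$ and so is a subalgebra. Moreover $z^2 \ne 0$ (otherwise $D = \alg z\rangle = \langle z\rangle$ would be one-dimensional, hence have no proper nonzero subalgebra), and $K \ne D$ (otherwise $z \in K \subseteq \Leib(D)$, forcing $D = \alg z\rangle$ to have trivial multiplication and $z^2 = 0$). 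So $K$ is a proper nonzero subalgebra, whence $K \in \{A,B\}$, $\dim K = 1$, and $K = \langle z^2\rangle$; since $z \notin K$ (else $D = \alg z\rangle \subseteq K$), the elements $z, z^2$ are independent and $D = \langle z\rangle + K = \langle z, z^2\rangle$. In particular $z^3 \in \langle z^2\rangle$, say $z^3 = \mu z^2$.

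Finally I would fix $\mu$ and produce $b, v$. The one-dimensional subspaces of $D$ are $\langle z^2\rangle$ and the lines $\langle z + sz^2\rangle$ ($s \in F$), and since $z^2z = z^2z^2 = 0$ one has $(z + sz^2)^2 = z^2 + sz^3 = (1 + s\mu)z^2$; such a line is a subalgebra exactly when $1 + s\mu = 0$. If $\mu = 0$ then $\langle z^2\rangle$ would be the only one-dimensional subalgebra and $\lat(D)$ a three-element chain rather than the diamond, so $\mu \ne 0$. Replacing $z$ by $\mu^{-1}z$ we may assume $z^3 = z^2$, and then (exactly as in Example \ref{diamond}) $b := z - z^2$ and $v := z^2$ form a basis of $D = \langle z, z^2\rangle$ with $b^2 = v^2 = vb = 0$ and $bv = v$. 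I expect the dimension count in the previous paragraph to be the main point: it is the Leibniz identity, pushing all higher powers of $z$ into $\langle z^2\rangle$, that prevents $\lat(D)$ from being longer than the diamond and rules out the spurious ``diamond algebras'' one would otherwise get by choosing $z^3$ freely.
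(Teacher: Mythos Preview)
Your argument is correct, but it takes a different route from the paper's. The paper argues directly from $\Leib(D)$: since the two atoms $B,V$ of $\lat(D)$ have no proper nonzero subalgebras they are $1$-dimensional; since a Lie algebra of dimension $\ge 2$ has more than two $1$-dimensional subalgebras, $D$ is not Lie, so $\Leib(D)$ is a proper nonzero subalgebra and hence equals one of the atoms, say $V$. Picking generators $b\in B$, $v\in V$, the relations $b^2=v^2=vb=0$ and $bv=\lambda v$ with $\lambda\ne 0$ drop out immediately from $V=\Leib(D)$ being an ideal with $\Leib(D)\,D=0$ and from $B\cap V=0$; a rescaling finishes, and $D=\langle b,v\rangle$ because this $2$-dimensional space is a subalgebra not equal to $0$, $B$, or $V$.

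Your approach instead produces a single generator $z$, shows by hand that $D=\langle z,z^2\rangle$ via the abelian subalgebra $K=\langle z^2,z^3,\dots\rangle$, and then pins down $z^3=\mu z^2$ with $\mu\ne 0$ by counting $1$-dimensional subalgebras. This is longer but more explicitly computational, and it dovetails nicely with the one-generator analysis in Section~2. The paper's version is shorter and more structural: once you see that $\Leib(D)$ must be one of the two atoms, the multiplication table is immediate without any power-series bookkeeping. One small point in your write-up: the step ``$K\in\{A,B\}$, $\dim K=1$'' uses that $K$ is abelian together with your earlier observation that any nonzero element of an atom generates it, so $K=\alg z^2\rangle=\langle z^2\rangle$; you have all the ingredients, but it is worth making that inference explicit.
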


\begin{proof} A Leibniz algebra $L$ which is not a Lie algebra has a proper subalgebra $\Leib(L)$, so an algebra without proper subalgebras is a 1-dimensional Lie algebra.  $D$ has two proper subalgebras, $B$ and $V$ say.  As not every 1-dimensional subspace of $D$ is a subalgebra, $D$ is not a Lie algebra, so one of these, $V$ say is $\Leib(L)$.  Take generators $b, v$ of $B, V$ respectively.  Then $b^2 = v^2 = vb=0$ and $bv = \lambda v$ for some $\lambda \in F$, $\lambda \ne 0$.  We replace $b$ with $b/\lambda$.
\end{proof}

To avoid repeated descriptions of notations, I use the following convention.  Whenever we have a subalgebra $U$ of $L$, $U'$ denotes the subalgebra $\phi(U)$ of $L'$ under the lattice isomorphism $\phi \co L \to L'$.  For an element $a \in L$, $a'$ denotes a generator of $\alg a \rangle'$.  (We shall see later, Lemma \ref{lem-kchains},  that $\alg a\rangle'$ is a one-generator algebra.  This holds trivially if $\dim\alg a\rangle = 1$.)  If $A \supseteq B$ are subalgebras of $L$, I denote by $A \div B$ the lattice interval consisting of all subalgebras $C$ with $A \supseteq C \supseteq B$.

\section{One-generator algebras}
 Consider the one-generator Leibniz algebra $L = \langle a, a^2, \dots, a^n \rangle$.   We have here a vector space $V = \langle a^2, \dots, a^n \rangle$ acted on by a  linear transformation $\theta : V \to V$ such that $V$ is generated as $F[\theta]$-module by the element $a^2$.  Let $f(x)= x^r g(x)$ be the characteristic polynomial of $\theta$.  Then $f(x)$ is also the minimum polynomial of $\theta$.  Put $V_1 = \theta^rV$.  Then $g(x)$ is the minimum polynomial of $\theta|{V_1}$.  

Put $h(x) = \bigl( g(x) - g(0)\bigr)/xg(0)$ and $b= a + h(\theta)a^2$.  Then
$$b^{r+2}= a^{r+2} + \theta^{r+1}h(\theta)a^2 = a^{r+2} + \bigl(g(\theta)-g(0)\bigr)a^{r+2}/g(0) = 0.$$
Let $B$ be the subalgebra generated by $b$.  Then $B^2$ is the only maximal subalgebra of $B$ and $\lat(B)$ consists of $B$ and all the subspaces of $B^2$.  If $a$ is not nilpotent, then $B$ is a proper subalgebra of $L$ not contained in the maximal subalgebra $V$, so $V$ is not the only maximal subalgebra of $L$.

\begin{lemma}\label{nilp1gen}  Let $B = \alg b\rangle$ be a subalgebra generated by a nilpotent element $b$.   Then $B \simeq B'$ and $B^2 = \Leib(B)$ is recognisable.

\begin{proof} $B^2$ is the only maximal subalgebra of $B$, so $W = \phi(B^2)$ is the only maximal subalgebra of $B'$.  There exists $c \in B'$ , $c \notin W$.  Since $c$ is not contained in any maximal subalgebra of $B'$, we have $\alg c \rangle = B'$.  Since $W$ is the only maximal subalgebra of $B'$, $c$ is nilpotent.  Since the maximal chains of $\lat(B)$ and $\lat(B')$ have the same length, we have $B \simeq B'$. $\Leib(B) = B^2$ and $\phi(\Leib(B)) = W = \Leib(B')$.
\end{proof}  
\end{lemma}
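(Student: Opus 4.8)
The plan is to transfer the structure of $B$ through its unique maximal subalgebra. Since $b$ is nilpotent, the discussion above gives that $B^2=\Leib(B)$ is the only maximal subalgebra of $B$ and that $\lat(B)$ consists of $B$ together with all subspaces of $B^2$. Restricting $\phi$ to the interval $B\div 0=\lat(B)$ yields a lattice isomorphism onto $\lat(B')$, so $W:=\phi(B^2)$ is the only maximal subalgebra of $B'$.

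Next I would exhibit a nilpotent generator of $B'$. Pick $c\in B'\setminus W$. Every proper subalgebra of the finite-dimensional algebra $B'$ lies in a maximal one, and $W$ is the only such; since $c\notin W$, the subalgebra $\alg c\rangle$ cannot be proper, so $\alg c\rangle=B'$ and $B'$ is a one-generator algebra. I claim $c$ is nilpotent. If not, then carrying out the construction preceding the lemma inside $B'=\alg c\rangle$ produces a nilpotent element of the form $c+h(\theta)c^2$ (with $\theta$ now left multiplication by $c$) that generates a \emph{proper} subalgebra of $B'$; this subalgebra is not contained in the ideal $B'^2$ because $c+h(\theta)c^2\equiv c\not\equiv 0\pmod{B'^2}$. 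As $B'^2$ is itself a maximal subalgebra of $B'$ (a one-generator algebra has $\dim B'/B'^2=1$), our proper subalgebra would then lie in a maximal subalgebra other than $B'^2$, contradicting the uniqueness of $W$. Hence $c$ is nilpotent, so $B'$ is a one-generator nilpotent algebra and $\lat(B')$ is $B'$ together with all subspaces of $B'^2=W$.

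It remains to match dimensions and invoke rigidity. Because $\phi$ carries maximal chains to maximal chains, the maximal chains of $\lat(B)$ and $\lat(B')$ have the same length, namely $1+\dim B^2$ and $1+\dim B'^2$; hence $\dim B^2=\dim B'^2$ and $\dim B=\dim B'$. In any left Leibniz algebra, putting $y=x$ in $x(yz)=(xy)z+y(xz)$ gives $(x^2)z=0$, and iterating shows $b^iz=0$ for all $i\ge 2$ and all $z$; together with $b\cdot b^i=b^{i+1}$ and $b^{\dim B+1}=0$ this determines the multiplication of $B$ from $\dim B$ alone. So $B\simeq B'$, and $\Leib(B)=B^2$ maps under $\phi$ to $W=B'^2=\Leib(B')\subseteq\Leib(L')$, proving that $B^2$ is recognisable.

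The step I expect to be the real obstacle is the middle one: showing that a generator of $B'$ must be nilpotent, i.e.\ that the lattice condition forces $B'$ to be a \emph{nilpotent} one-generator algebra rather than merely a one-generator algebra happening to have a unique maximal subalgebra. The rest is bookkeeping with chain lengths together with the rigidity of cyclic nilpotent Leibniz algebras.
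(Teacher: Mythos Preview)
Your proof is correct and follows essentially the same route as the paper's: identify $W=\phi(B^2)$ as the unique maximal subalgebra of $B'$, pick $c\in B'\setminus W$ to obtain a generator, argue that $c$ must be nilpotent, and then match dimensions via chain length to get $B\simeq B'$ and $\phi(\Leib(B))=\Leib(B')$. The only difference is that you spell out in full the two steps the paper leaves as one-line assertions --- why uniqueness of the maximal subalgebra forces $c$ to be nilpotent (via the $b=a+h(\theta)a^2$ construction immediately preceding the lemma), and why two nilpotent one-generator Leibniz algebras of the same dimension are isomorphic --- so your version is simply a more detailed rendering of the same argument.
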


\begin{lemma} \label{ifnilp} Suppose $c \notin V$ is nilpotent, then $\alg c \rangle = B$.
\end{lemma}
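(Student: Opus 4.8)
The plan is to compute $\alg c\rangle$ exactly as a subspace and recognise it as $B$. Write $\theta$ for the left multiplication by $a$ restricted to $V=\langle a^2,\dots,a^n\rangle=L^2$, and use the Fitting decomposition $V=V_0\oplus V_1$, where $V_0=\ker\theta^r$ (so $\theta|_{V_0}$ is nilpotent and $V_0$ is the generalized $0$-eigenspace, and $V_0$ is cyclic over $F[\theta]$) and $V_1=\theta^rV$ (so $\theta|_{V_1}$ is invertible with minimum polynomial $g$). From the computation preceding Lemma~\ref{nilp1gen} one has $b^{r+2}=0$, hence $\theta^r(b^2)=0$, so $B^2=F[\theta]\,b^2\subseteq V_0$; in fact $b^2=\tfrac1{g(0)}g(\theta)a^2$ and $g(\theta)V=V_0$, so $B^2=V_0$ and $B=\langle b\rangle+V_0$. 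The goal is thus to prove $\alg c\rangle=\langle b\rangle+V_0$.

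First I would reduce to $c=a+v$ with $v\in V$: since $V=L^2$ has codimension $1$ in $L$ and $c\notin V$, the $\langle a\rangle$-component of $c$ is nonzero, and $\alg{\mu c}\rangle=\alg c\rangle$ for $\mu\neq0$, so we may scale. The one computational input is that $V\subseteq\Leib(L)$ and $\Leib(L)\cdot L=0$ (from the left Leibniz identity $(xx)y=0$); this kills the $va$, $v^2$ terms and gives $c^2=a^2+\theta v\in V$ and, by an easy induction, $c^k=\theta^{k-2}(c^2)$ for all $k\ge 2$. Hence $\alg c\rangle=\langle c\rangle+F[\theta]\,c^2$.

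Now nilpotency of $c$ forces $\theta^m(c^2)=0$ for some $m$, so $c^2\in V_0$ and therefore $F[\theta]\,c^2\subseteq V_0$. The reverse inclusion is the step I expect to be the main obstacle. Project $a^2=e_0+e_1$ along $V=V_0\oplus V_1$; since $a^2$ generates $V$ as an $F[\theta]$-module and the projection is $F[\theta]$-linear, $e_0$ generates $V_0$ over $F[\theta]$. Because $c^2=a^2+\theta v\equiv e_0\pmod{\theta V_0}$ and $V_0$ is cyclic over the local ring $F[\theta|_{V_0}]$, Nakayama's lemma gives $F[\theta]\,c^2=V_0$. Thus $\alg c\rangle=\langle c\rangle+V_0$.

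It remains to show $c-b\in V_0$, which finishes the proof, since then $\langle c\rangle+V_0=\langle b\rangle+V_0=B$. Both $c^2$ and $b^2$ lie in $V_0$, so their $V_1$-components vanish. Reading off the $V_1$-component of $c^2=a^2+\theta v$ gives $\theta(v_{V_1})=-e_1$, and reading off that of $b^2=a^2+\theta h(\theta)a^2$ together with $\theta h(\theta)=-1$ on $V_1$ gives $\theta\bigl((h(\theta)a^2)_{V_1}\bigr)=-e_1$; since $\theta|_{V_1}$ is invertible, $v$ and $h(\theta)a^2$ have the same $V_1$-component. As $c-b=v-h(\theta)a^2$ also has no $\langle a\rangle$-component, $c-b\in V_0$, and therefore $\alg c\rangle=\langle c\rangle+V_0=\langle b\rangle+V_0=B$.
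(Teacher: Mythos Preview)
Your argument is correct, and the engine driving it---the Fitting decomposition $V=V_0\oplus V_1$ together with the invertibility of $\theta|_{V_1}$---is exactly what the paper uses. The paper's proof is much shorter, though, because it decomposes $c$ relative to $L=\langle b\rangle\oplus B^2\oplus V_1=B\oplus V_1$ rather than $L=\langle a\rangle\oplus V$: writing $c=\lambda b+b_1+v$ with $\lambda\neq0$, $b_1\in B^2=V_0$, $v\in V_1$, nilpotency of $c$ (hence $\theta^m c^2=0$, and the $V_1$-part of $c^2$ is $\lambda\theta v$) forces $v=0$ immediately, so $c\in B$; since $c\notin B^2$ and $B^2$ is the unique maximal subalgebra of $B$, $\alg c\rangle=B$ follows at once. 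Your choice of basepoint $a$ rather than $b$ means you must recover $c\in B$ indirectly via the computation $c-b\in V_0$, and your Nakayama step establishing $F[\theta]c^2=V_0$, while correct, then becomes unnecessary: once $c\in B\setminus B^2$ the conclusion is immediate from the lattice structure of $B$ already established before Lemma~\ref{nilp1gen}. So the two proofs share the key idea but differ in economy; the paper's coordinate choice front-loads the work into the construction of $b$.
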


\begin{proof} Since $c \notin V$, we have $c = \lambda b + b_1 + v$ for some $\lambda \in F$, $\lambda \ne 0$, $b_1 \in B^2$ and $v \in V_1$.  Since $\theta$ is non-singular on $V_1$, $v = 0$ and $c \in B$.  But $c$ is not in the only maximal subalgebra of $B$, so $\alg c \rangle = B$.
\end{proof} 

To determine the invariant subspaces, we use the prime power factorisation $g(x) = p_1^{r_1}(x) \dots p_k^{r_k}(x)$ where the $p_i(x)$ are distinct irreducible polynomials.  Since $V$ and so also $V_1$ is generated as $F[\theta]$-module by a single element, the only invariant subspaces of $V_1$ are the spaces
$$V_{s_1, \dots, s_k} = \{v \in V \mid p_1^{s_1}(\theta) \dots p_k^{s_k}(\theta) v = 0\} = \theta^r p_1^{r_1-s_1}(\theta) \dots p_k^{r_k-s_k}(\theta) V,$$
for $s_i \le r_i$.  Put $U_{s_1, \dots, s_k} = B + V_{s_1, \dots, s_k}$.  

\begin{theorem}  Let $L = \alg a \rangle$.  Then the $U_{s_1, \dots, s_k}$ are the only subalgebras of $A$ not contained in $V$.  The lattice interval $L \div B$ is the lattice product of $k$ chains of lengths $r_1, \dots, r_k$.
\end{theorem}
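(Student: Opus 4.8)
The plan is to work entirely inside $V=\Leib(L)=\langle a^2,\dots ,a^n\rangle$, which is an abelian ideal satisfying $V\cdot L=\Leib(L)\cdot L=0$. I will first record three elementary facts. Since $a^2\in\Leib(L)$, left multiplication by $b=a+h(\theta)a^2$ agrees with $\theta$ on $V$; since $b^2=(1+\theta h(\theta))a^2=g(\theta)a^2/g(0)$ and $V\cong F[\theta]/(x^r g)$ as $F[\theta]$-module, $B^2=g(\theta)V$ is the Fitting null component $V_0=\{v\in V:\theta^r v=0\}$, so $V=V_0\oplus V_1$ with $V_0=B^2\subseteq B$; and for any $c=a+v$ with $v\in V$ one gets $c\cdot c^2=\theta(c^2)$ (because $v\cdot x=0$ and $c^2\in V$), whence $\alg c\rangle=Fc+F[\theta]c^2$ with $\alg c\rangle^2=F[\theta]c^2\subseteq V$. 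I also dispose at once of the degenerate case $\dim\alg a\rangle=1$, where $V=0$, $k=0$, $B=L$, and the statement is trivial; thus I assume $a\notin V$.

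The first and main step is to show that every subalgebra $U\not\subseteq V$ contains $B$. Choose $c\in U\setminus V$ and rescale so that $c=a+v$ with $v\in V$. Applying the construction that opens this section to the one-generator algebra $\alg c\rangle$ (with $\theta$ restricted to $\alg c\rangle^2=F[\theta]c^2$) produces a nilpotent element $c_0=c+h_c(\theta)c^2\in\alg c\rangle$, and $c_0\in a+V$ forces $c_0\notin V$. By Lemma~\ref{ifnilp}, $\alg c_0\rangle=B$, so $B\subseteq\alg c_0\rangle\subseteq\alg c\rangle\subseteq U$.

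Next I identify the interval $L\div B$. Since $L=B+V$ and $B\subseteq U$, the modular law gives $U=B+W$ with $W:=U\cap V$; then $b\cdot W=\theta(W)\subseteq U\cap V$ and $B^2\subseteq U\cap V$ show that $W$ is a $\theta$-invariant subspace with $V_0\subseteq W\subseteq V$. Conversely, for any such $W$, computing products ($B\cdot W=\theta(W)\subseteq W$, $W\cdot B=W\cdot W=0$) shows $B+W$ is a subalgebra, and $(B+W)\cap V=B^2+W=W$; hence $U\mapsto U\cap V$ is a lattice isomorphism from $L\div B$ onto the lattice of $\theta$-invariant subspaces between $V_0$ and $V$. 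By the Fitting decomposition this lattice is isomorphic, via $W\mapsto W\cap V_1$ with inverse $W_1\mapsto V_0\oplus W_1$, to the lattice of $\theta$-invariant subspaces of $V_1$, which by the discussion preceding the theorem are precisely the $V_{s_1,\dots ,s_k}$; from the formula $V_{s_1,\dots ,s_k}=\theta^r p_1^{r_1-s_1}(\theta)\cdots p_k^{r_k-s_k}(\theta)V$ and the fact that $V_1$ is $F[\theta]$-cyclic one reads off $V_{s_1,\dots ,s_k}\subseteq V_{t_1,\dots ,t_k}$ iff $s_i\le t_i$ for all $i$. Tracing the identifications, $U=B+W=B+V_{s_1,\dots ,s_k}=U_{s_1,\dots ,s_k}$; by the first step these $U_{s_1,\dots ,s_k}$ are exactly the subalgebras containing $B$, which are exactly the subalgebras not contained in $V$ (each $U_{s_1,\dots ,s_k}$ contains $b\notin V$). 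Since an order isomorphism between lattices is a lattice isomorphism, $L\div B$ is isomorphic to $\{0,\dots ,r_1\}\times\cdots\times\{0,\dots ,r_k\}$, the lattice product of $k$ chains of lengths $r_1,\dots ,r_k$.

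The main obstacle is the first step: producing, inside an arbitrary subalgebra that meets $a+V$, the particular nilpotent element which generates $B$. Once that is in hand, everything else is the linear algebra of the cyclic $F[\theta]$-module $V_1$ together with the modular law, resting only on the bookkeeping identities $v\cdot x=0$, $b\cdot v=\theta(v)$, and $B^2=V_0$.
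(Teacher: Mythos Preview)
Your proof is correct and follows essentially the same approach as the paper: produce a nilpotent element inside any subalgebra not contained in $V$ via the opening construction, invoke Lemma~\ref{ifnilp} to get $B\subseteq U$, and then identify the interval $L\div B$ with the lattice of $\theta$-invariant subspaces of $V_1$. Your handling of the second step---using the modular law to write $U=B+(U\cap V)$ and the Fitting splitting $V=V_0\oplus V_1$ to set up the bijection explicitly---is more detailed than the paper's (which simply asserts that $C=U_{s_1,\dots,s_k}$ and that the product-of-chains description follows), but the underlying strategy is the same.
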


\begin{proof}  Let $C$ be a subalgebra of $L$ not contained in $V$.  Then $C$ has an element $c = b + b_1 + v$ where $b_1 \in B^2$ and $v \in V_1$.  As above , we obtain a nilpotent element $c_0 = c+w \in C$ where $w \in V$.  By Lemma \ref{ifnilp}, $c_0$ generates $B$.  Thus $B \subseteq C$.  It follows also that  $v \in C$, and the invariant subspace generated by $v$ is contained in $C$.  It follows that $C = U_{s_1, \dots, s_k}$ for some $s_1, \dots, s_k$.  That the lattice interval is the product of chains as described follows.
\end{proof}

To illustrate this, I consider the case $r=0$, $k=1$, $r_1=1$ and $p_1(x) = x-1$.

\begin{example}\label{ex-schain} Let $L = \langle b, v_1, v_2\rangle$ with $b^2 =  v_ix = 0$ for all  $x \in L$ and $bv_1 = v_1+v_2$, $bv_2 = v_2$.  Then $L = \alg b+v_1\rangle$ and $\lat(L)$ is
\begin{center}
\setlength{\unitlength}{1mm}
\begin{picture}(30,30)(-15,-2)
\put(8,16){\circle*{2}} \put(0,8){\circle*{1}} \put(16,8){\circle*{2}} \put(8,0){\circle*{2}} 
\put(0,8){\line(1,1){8}} \put(8,0){\line(1,1){8}}
\put(0,8){\line(1,-1){8}} \put(8,16){\line(1,-1){8}}
\put(8,8){\circle*{1}} \put(8,0){\line(0,1){16}}
\put(3,8){\circle*{0.7}} \put(4,8){\circle*{0.7}} \put(5,8){\circle*{0.7}}
\put(13,8){\circle*{0.7}} \put(11,8){\circle*{0.7}} \put(12,8){\circle*{0.7}}
\put(-7,7){$\langle v_2 \rangle$} \put(17.5,7){$\langle v_1 \rangle$} \put(9.5,15){$V = V_1$} \put(10,-1){$0$}
\put(-12,8){\circle*{2}} \put(8,0){\line(-5,2){20}}
\put(-12,8){\line(1,1){16}} \put(4,24){\circle*{2}} 
\put(-4,16){\circle*{1}} 
\put(4,24){\line(1,-2){4}} \put(-4,16){\line(1,-2){4}}
\put(5.5,23){$L$} \put(-18,7){$\langle b \rangle$} 
\put(-14.5,15){$\langle b,v_2 \rangle$} 
\end{picture}\end{center}
\end{example}
  
Observe that the emphasised points $L,V_1, \langle b\rangle, \langle v_1\rangle, 0$ form a sublattice, and that $\lat(L)$ is not modular.  Indeed, for any one-generator algebra with $\dim(V_1) > 1$, taking $v_1$ an element which generates $V_1$ under the action of $\theta$, we obtain in this way the standard non-modular lattice.

\begin{definition} The \textit{signature} of the one-generator Leibniz algebra $L$ is the list $[r|r_1, \dots r_k|d_1, \dots, d_k]$ where $d_i$ is the degree of the irreducible polynomial $p_i(x)$.  If $k=1$, we call $L$ a \textit{single-chain algebra}.
\end{definition}
Clearly, from the signature and knowledge of the field $F$, one can reconstruct $\lat(L)$. The algebra $L$ of Example \ref{ex-schain} is a single-chain algebra with signature $[0|2|1]$.  

\begin{lemma} \label{lem-schain}  Suppose that $L$ is a single-chain algebra.  Then $L'$ is a single-chain algebra with the same signature as $L$.  If $\dim(L) > 2$, then $\phi(\Leib(L)) = \Leib(L')$.
\end{lemma}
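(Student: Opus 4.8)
The plan is to invoke the structure theorem for one-generator algebras (the Theorem above) twice: first to describe $\lat(L)$ and extract the features that will survive a lattice isomorphism, and then---after showing that $L'$ is itself a one-generator algebra---to read the signature and the Leibniz kernel back off $\lat(L')$.

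So write $L=\alg a\rangle$ with signature $[r\,|\,r_1\,|\,d_1]$. By the Theorem, $\Leib(L)=V$, the interval $\lat(V)$ is the vector space lattice of the $(\dim L-1)$-dimensional space $V$, and the subalgebras of $L$ not contained in $V$ form a chain $B=U_0\subset U_1\subset\dots\subset U_{r_1}=L$, each $U_s$ covering $U_{s-1}$, where $B=\alg b\rangle$ is generated by a nilpotent element and $B\cap V=B^2=\Leib(B)$ has dimension $r$. The first point to record is that $\lat(L)$ has exactly two coatoms: a maximal subalgebra contained in $V$ must be $V$ itself (as $V$ is a proper subalgebra of $L$), and a maximal subalgebra not contained in $V$ must be the one of the $U_s$ covered by $L$, namely $M:=U_{r_1-1}$.

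Since $\phi$ is a lattice isomorphism, $\lat(L')$ also has exactly two coatoms, so $L'$ has exactly two maximal subalgebras, $V'=\phi(V)$ and $M'=\phi(M)$. A finite-dimensional algebra over a field is not the union of two proper subspaces, so some $c'\in L'$ lies in no maximal subalgebra; then $\alg c'\rangle=L'$, so $L'$ is a one-generator algebra and the Theorem applies to it. Applying it to $L'=\alg c'\rangle$: $\Leib(L')$ is a coatom of $\lat(L')$, $\lat(\Leib(L'))$ is the vector space lattice of $\Leib(L')$, which has dimension $\dim L'-1=\dim L-1$, and the subalgebras of $L'$ not contained in $\Leib(L')$ form a lattice interval that is a product of $k'$ chains of positive lengths. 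Now suppose $\dim L>2$, so this dimension is $\ge2$. I claim $\Leib(L')=V'$: the interval $\lat(V')\cong\lat(V)$ is a vector space lattice of dimension $\ge2$, while $\lat(M')\cong\lat(M)$ is not, because in $\lat(L)$ the subalgebra $M=U_{r_1-1}$ covers at most two subalgebras ($M\cap V$, and $U_{r_1-2}$ when $r_1\ge2$), whereas the top element of a vector space lattice of dimension $\ge2$ covers at least three elements. This yields $\phi(\Leib(L))=\phi(V)=\Leib(L')$, the second assertion. It follows that the lattice interval of subalgebras of $L'$ not contained in $\Leib(L')=V'$ is the chain $\phi(U_0)\subset\dots\subset\phi(U_{r_1})$; since a product of $k'$ chains of positive lengths is itself a chain only when $k'=1$, we get $k'=1$ and $r'_1=r_1$. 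The nilpotent generator subalgebra of $L'$ is the bottom $\phi(U_0)=\phi(B)$ of this chain, which by Lemma~\ref{nilp1gen} is isomorphic to $B$, so $r'=\dim B^2=r$, and comparing $\dim L'=r'+r'_1d'_1+1$ with $\dim L=r+r_1d_1+1$ forces $d'_1=d_1$. Thus $L'$ is a single-chain algebra with the same signature as $L$. When $\dim L=2$ the only available signature is $[0\,|\,1\,|\,1]$ and $\lat(L)$ is the diamond lattice, hence so is $\lat(L')$; by the lemma on diamond algebras $L'$ is a diamond algebra, and with $b,v$ as there and $c'=b+v$ one has $(c')^2=v$ and $(c')^3=(c')^2$, so $L'=\langle c',(c')^2\rangle$ again has signature $[0\,|\,1\,|\,1]$ (the statement about $\Leib$ being vacuous here, as Example~\ref{diamond} shows it must be).

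The step I expect to be the real obstacle is singling out $\Leib(L')$ from among the two coatoms of $\lat(L')$ when $\dim L>2$; the rest is bookkeeping with the Theorem. What is needed is a purely lattice-theoretic property separating the Leibniz kernel of a single-chain algebra from its other maximal subalgebra, and the one used above---the interval below $\Leib(L)$ being a vector space lattice of dimension $\ge2$, which the interval below $M$ is not---works exactly because $\dim L>2$ makes that dimension at least $2$. This is also why the hypothesis $\dim L>2$ is essential: for the diamond algebra both coatoms are $1$-dimensional (hence are $1$-dimensional vector space lattices) and cannot be told apart.
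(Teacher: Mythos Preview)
Your proof is correct and follows essentially the same approach as the paper's: show that $L'$ is one-generator because it has exactly two maximal subalgebras and cannot be their union, then distinguish $\Leib(L')$ among the two coatoms by the fact that $\lat(M)\cong\lat(M')$ has at most two coatoms while $\lat(V)\cong\lat(V')$ is a vector space lattice of dimension $\ge 2$ (hence has at least three), and finally read off the signature from the chain $L'\div B'$ and $\dim(B'\cap V')$. The only step you assert without proof is $\dim L'=\dim L$; the paper handles this (and you could too) by noting that chains in $\lat(V)$ already force $\dim L'\ge 1+\dim V=\dim L$.
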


\begin{proof}  $L$ has exactly two maximal subalgebras, so $L'$ has exactly two maximal subalgebras.  A vector space cannot be the set union of two proper subspaces, so there exists $a' \in L'$ which is not contained in any maximal subalgebra.  Thus $\alg a' \rangle = L'$. Let $[r|r_1|d_1]$ be the signature of $L$.  Let $M$ be maximal subalgebra containing $B$.

The lattice of $V$ is the vector space lattice of dimension $\dim(V) = r+r_1d_1$ and it follows that $\dim(L') \ge 1+r+r_1d_1$, while chains in $\lat(M)$ have length at most $r+r_1d_1$.  If $\Leib(L') = M'$, then $M$ also has the $(r+ r_1d_1)$-dimensional vector space lattice.  But $M$ has at most two maximal subalgebras, one containing $B$ and $M \cap V$.  This requires $\dim(M) = 1$ and $\dim(L)=2$.  In this case, $L \simeq L'$.  If $\dim(L)>2$, then $\Leib(L') = V'$, and the signature of $L'$ can be read from the length of the chain $L' \div B'$ and the dimensions of $B' \cap V'$ and $V'$.
\end{proof}

\begin{lemma}\label{lem-kchains}  Let $L$ be a one-generator Leibniz algebra and suppose that the number $k$ of chains is greater than $1$.  Then $L'$ is a one-generator algebra with the same signature as $L$ and $\phi(\Leib(L)) = \Leib(L')$.
\end{lemma}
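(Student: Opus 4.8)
The plan is to reduce the whole statement to showing that $L'$ is one-generated, and then to read off both the signature of $L'$ and the position of $\Leib(L')$ from the lattice. First I would pin down the maximal subalgebras of $L$. Since $[\Leib(L),L]=0$, the kernel $V=\Leib(L)$ is abelian, so $\lat(V)$ is the vector space lattice of dimension $n-1=r+\sum_i r_id_i$; and by the structure theorem above, the maximal subalgebras of $L$ are exactly $V$ together with the $k$ coatoms $U_1,\dots,U_k$ of the interval $L\div B$. Each $U_i$ has dimension $n-d_i<n$, and $\lat(U_i)$ is \emph{not} a vector space lattice of dimension $n-1$ (either $\dim U_i<n-1$, or, when $d_i=1$, a suitable one-dimensional subspace of $U_i=B+p_i(\theta)V_1$ fails to be a subalgebra). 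So $V=\Leib(L)$ is singled out inside $\lat(L)$ as the unique maximal subalgebra $M$ with $\lat(M)$ a vector space lattice of dimension $n-1$. It therefore suffices to prove (a) that $L'$ is one-generated, and (b) that $L'$ then has the same signature as $L$; granting these, $\Leib(L')$ is characterised inside $\lat(L')$ by exactly this lattice property, which $\phi$ preserves, so $\phi(\Leib(L))=\phi(V)=\Leib(L')$.

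For (a) I would exhibit a generator of $L'$. The maximal subalgebras of $L'$ are $V'$ and $U_1',\dots,U_k'$, with $B'=\phi(B)$ contained in every $U_i'$ but not in $V'$; by Lemma~\ref{nilp1gen}, $B'\simeq B$ is generated by a nilpotent element $b'$, so $b'\in\bigcap_iU_i'$ while $b'\notin V'$. Hence it is enough to find $w'\in V'$ lying in none of the subspaces $W_i':=U_i'\cap V'=\phi(U_i\cap V)$: then $b'+w'$ avoids every maximal subalgebra of $L'$ and so generates $L'$. Since $\phi$ carries $\lat(V)$ isomorphically onto $\lat(V')$, and $\lat(V)$ is the vector space lattice of dimension $n-1$, we get $\dim V'=n-1$, every subspace of $V'$ is a subalgebra, and $\phi$ transports the $F$-dimensions of $V$, of the $W_i=U_i\cap V$, and of all their intersections over to $V'$ unchanged. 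In $L$ one computes $W_i=B^2+p_i(\theta)V_1$, and a cyclic generator of $V_1$ lies in no $p_i(\theta)V_1$ (it is a unit modulo $g$), so $\bigcup_iW_i\neq V$. Over an infinite field it is then immediate that $\bigcup_iW_i'\neq V'$; over a finite field the same follows by inclusion–exclusion, since the alternating sum of powers of $|F|$ computing $|\bigcup_iW_i'|$ equals the one computing $|\bigcup_iW_i|<|V|=|V'|$. Any $w'$ outside this union finishes (a). I expect (a) to be the main obstacle: "one-generated" is not itself a lattice property, and the finite-field case really needs this counting transfer, which in turn rests on the standard fact that an algebra whose subalgebra lattice is a vector space lattice of dimension $m$ has dimension $m$ with all its subspaces subalgebras.

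Finally, for (b): once $L'=\alg a'\rangle$ is one-generated it is non-Lie (its lattice has $k+1\ge 3$ coatoms), and its signature is recovered from $\lat(L')\cong\lat(L)$ just as for $L$ — the number of coatoms, $k+1$, gives $k$ chains; $\lat(B')\cong\lat(B)$ gives $r$; the product-of-chains interval $L'\div B'=\phi(L\div B)$ gives the multiset $\{r_i\}$; and applying Lemma~\ref{lem-schain} to the single-chain subalgebra $\phi(U_{0,\dots,r_i,\dots,0})$ gives $d_i$, the only degenerate case ($\dim=2$, a diamond algebra) forcing $d_i=1$. Thus $L'$ has signature $[r|r_1,\dots,r_k|d_1,\dots,d_k]$, and by the first paragraph applied to $L'$, $\Leib(L')$ is the unique maximal subalgebra of $L'$ whose subalgebra lattice is the vector space lattice of dimension $n-1$; since $\phi$ preserves this property and $\Leib(L)=V$ has it, $\phi(\Leib(L))=\Leib(L')$.
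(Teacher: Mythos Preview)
Your strategy is correct and takes a genuinely different route from the paper's. The paper works in the opposite order: it first proves $V'=\Leib(L')$, and only then shows $L'$ is one-generated. For the first step it argues that $B'\not\subseteq\Leib(L')$ --- via Lemma~\ref{lem-schain} applied to a single-chain $C_i$ of dimension $>2$, together with a direct lattice check in the residual case $k=2$, $r=0$, $r_i=d_i=1$ --- and then notes that the images $\phi(\Leib(B))$ and $\phi(\Leib(C_i))$ all lie in $\Leib(L')$ and jointly generate $V'$, forcing $V'=\Leib(L')$. For one-generation it then analyses $V'$ as an $F[\beta]$-module (where $\beta$ is left multiplication by a generator $b'$ of $B'$): the irreducible factors $q_i$ coming from the chains $C_i'$ must be pairwise distinct, since otherwise a diagonal submodule would produce a subalgebra of $L'$ outside the product-of-chains interval $L'\div B'$; hence $\beta^rV'$ is cyclic, and $b'$ plus a cyclic vector generates $L'$.

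Your argument sidesteps this module-theoretic distinctness step entirely: you manufacture a generator $b'+w'$ directly by choosing $w'$ outside every $W_i'$, and only afterward invoke the one-generator structure theory on $L'$ to locate $\Leib(L')$ and read off the signature. This is a cleaner path to one-generation; the paper's path, in return, makes the matching of the degrees $d_i$ completely explicit by actually exhibiting the minimal polynomials on the $L'$ side. One simplification worth noting: your finite-field inclusion--exclusion (and the ``standard fact'' it leans on) can be replaced by a uniform argument that avoids any dimension-counting. Since $\bigcup_i W_i\neq V$, some atom of $\lat(V)$ lies below no $W_i$; its $\phi$-image is an atom of $\lat(V')$ lying below no $W_i'$, and an atom in the subalgebra lattice of any Leibniz algebra is necessarily one-dimensional (a non-Lie algebra has $\Leib$ as a proper subalgebra, a Lie algebra of dimension $\ge2$ has one-dimensional subalgebras), so this atom is already $\langle w'\rangle$ for a suitable $w'$, over every field.
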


\begin{proof}  Let $[r|r_1, \dots, r_k|d_1, \dots, d_k]$ be the signature of $L$.  For each $i$, we have a single-chain subalgebra $C_i \supset B$ with signature $[r|r_i|d_i]$ such that $L \div B$ is the product of the chains $C_i\div B$. 

I prove first that $B' \not\subseteq \Leib(L')$.  For this, it is sufficient to consider the case $k=2$.  If $r>0$ or for any $i$, we have $r_id_i>1$, then by Lemma \ref{lem-schain}, $B' \not\subseteq \Leib(L')$. 
So suppose that  $r=0$ and that $r_i = d_i = 1$ for $i=1,2$.  Then $\lat(L)$ is
\begin{center}
\setlength{\unitlength}{1mm}
\begin{picture}(50,40)(0,-5)
\put(20,30){\circle*{1}} \put(18.5,31.5){$L$}
\put(10,20){\circle*{1}} \put(5,19){$C_1$}
\put(20,20){\circle*{1}} \put(22,19){$C_2$}
\put(40,20){\circle*{1}} \put(42,19){$V$}
\put(10,20){\line(1,1){10}}  \put(20,20){\line(0,1){10}}
\put(20,30){\line(2,-1){20}}
\put(10,10){\circle*{1}} \put(5,9){$B$}
\put(10,10){\line(0,1){10}}  \put(10,10){\line(1,1){10}}
\put(30,10){\circle*{1}} \put(40,10){\circle*{1}} 
\put(20,20){\line(2,-1){20}} \put(10,20){\line(2,-1){20}} 
\put(30,10){\line(1,1){10}} \put(40,10){\line(0,1){10}} 
\put(30,0){\circle*{1}} \put(29,-3.5){$0$}
\put(10,10){\line(2,-1){20}} \put(30,0){\line(0,1){10}} 
\put(30,0){\line(1,1){10}}
\put(30,0){\line(2,1){20}} 
\put(50,10){\circle*{1}} \put(40,20){\line(1,-1){10}}
\put(43,10){\circle*{0.7}} \put(44,10){\circle*{0.7}} \put(45,10){\circle*{0.7}}
\end{picture}\end{center}
As $C_1, C_2$ do not have vector space lattices, $\Leib(L')$ cannot be $C'_1$ or $C'_2$.  If $B' \subseteq \Leib(L')$, then $B' = \Leib(L')$ and $L'/B.$ is a Lie algebra, contrary to $L' \div B'$ being the diamond lattice.

We now have, whatever the signature of $L$, that $\phi(\Leib(B))$, $\phi(\Leib(C_i))$ are all contained in $\Leib(L')$.  But they generate $V'$, so $V' \subseteq \Leib(L)$.  As $V'$ is a maximal subalgebra of $L'$, this implies that $V' = \Leib(L')$.

Take an element $b'$ which generates $B'$ and let $\beta\co V' \to V'$ be left multiplication by $b'$.  Then the minimum polynomial of $\beta|C'_i \cap V'$ is $x^rq^{r_i}_i(x)$ for some irreducible polynomial $q_i(x)$.  I now prove that the $q_i$ are distinct.

Suppose that $q_1=q_2$.  Let $W_i$ be a minimal invariant subspace of $C'_i \cap V'$ with minimum polynomial $q_i(x)$.  Then $W_1 \simeq W_2$ as $F[\beta]$-modules.  Take an isomorphism $\gamma \co W_1 \to W_2$ and put $W^* = \{w+\gamma(w) \mid w \in W_1 \}$.  Then $W^* $ is an invariant subspace and $B'+W^*$ is a subalgebra not in the product of chains.  Therefore $q_1 \ne q_2$.

We now have that $\beta^rV'$ is generated as $F[\beta]$-module by some single element $w$, and it follows that $\alg b'+w \rangle = L'$.
\end{proof}

\section{Recognising $\Leib(L)$}
In this section, $L,L'$ are Leibniz algebras and $\phi \co \lat(L) \to \lat(L')$ is a lattice isomorphism.  The aim is to prove that $\phi(\Leib(L)) = \Leib(L')$.  For this to fail, we must have a diamond subalgebra $\langle b,v\rangle$, $bv = v$, with $\langle b \rangle'\subseteq \Leib(L')$.  I shall assume this and show that, if $\dim(L) \ge 3$, then there are other subalgebras whose relation to $\langle b, v\rangle$ makes this impossible.  It is convenient to represent data as a geometric configuration, with points representing 1-dimensional subalgebras and lines representing 2-dimensional subalgebras, all of whose subspaces are subalgebras.  Thus the lines represent Lie subalgebras.  Broken lines are used to represent 2-dimensional subalgebras which have subspaces that are not subalgebras.  Their lattice of subalgebras is the diamond lattice, the product of two chains each of length 1.

\begin{theorem}  Let $L,L'$ be Leibniz algebras and let $\phi \co \lat(L) \to \lat(L')$ be a lattice isomorphism.  Suppose $\dim(L) \ge 3$.  Then $\phi(\Leib(L)) = \Leib(L'))$.

\begin{proof}  In the notation set out above, I assume that $\langle b \rangle' \subseteq \Leib(L')$.  I investigate and eliminate a number of cases.

\textit{Case 1:}  Suppose that there exists $x \in L$, $x^2 = xb=bx=xv=vx=0$.  Since $(b+\lambda x)v=v$, we have that $\langle (b+\lambda x), v\rangle$ is a diamond subalgebra for all $\lambda \in F$.
\begin{center} \setlength{\unitlength}{1mm}
\begin{picture}(100,40)
\put(50,30){\circle*{1}} \put(50,10){\circle*{1}} 
\put(30,10){\circle*{1}} \put(70,10){\circle*{1}} 
\multiput(52,28)(6,-6){3}{\line(1,-1){4}}
\multiput(48,28)(-6,-6){3}{\line(-1,-1){4}}
\put(25,10){\line(1,0){50}}
\put(50,5){\line(0,1){30}}
\put(52,29){$\langle v \rangle$}
\put(52,6){$\langle x \rangle$}
\put(28,6){$\langle b \rangle$}
\put(65,6){$\langle b+x \rangle$}
\end{picture}
\end{center}

For suitable choice of $b' $ and $x'$, we have $b'+x' \in \langle b+x\rangle'$. Since $b' \in \Leib(L)$, we can choose $v'$ such that $v'b'=b'$.  Since $\langle v', b'+x'\rangle$ is a diamond algebra, we must have $v'(b'+x') = \lambda(b'+x')$ for some $\lambda \in F$.   But $b', (b'+x') \in \Leib(L')$, so $x' \in \Leib(L')$.  Since $\langle v', x' \rangle$ is a Lie algebra,  $v'x' = - x'v'=0$ and $v'(b'+x') = b'$ contrary to $v'(b'+x') = \lambda(b'+x')$.  Thus Case 1 is impossible.

\textit{Case 2:}  Suppose that $\dim(\Leib(L)) > 1$.  Then there exists $w \in \Leib(L)$ not in $\langle v\rangle$.  If in the space $W$ generated by $w$ under the action $\theta$ of $b$ contains an element $w_0$ such that $\theta w_0 = 0$, then we have Case 1.  Therefore $\theta$ acts non-singularly on $W$ and $\alg b+w\rangle \supset W$.  If $\dim(W) >1$, then we cannot have $b' \in \Leib(L')$, so $bw=\lambda w$, $\lambda \ne 0$.  
If $\lambda \ne 1$, then for every $\mu \ne 0$, $b$ and $v+\mu w$ generate a 3-dimensional subalgebra.  If $\lambda = 1$, then for all $\mu$, $b$ and $v+\mu w$ generate a diamond algebra.
\begin{center} \setlength{\unitlength}{1mm}
\begin{picture}(120,40)
\put(30,30){\circle*{1}} \put(30,10){\circle*{1}} 
\put(10,10){\circle*{1}} \put(50,10){\circle*{1}} 
\multiput(32,28)(6,-6){3}{\line(1,-1){4}}
\multiput(28,28)(-6,-6){3}{\line(-1,-1){4}}
\put(5,10){\line(1,0){50}}
\put(32,29){$\langle b \rangle$}
\put(23.5,6){$\langle v+\mu w \rangle$}
\put(8,6){$\langle v \rangle$}
\put(47.5,6){$\langle w \rangle$}
\put(16,0){Case 2(a): $\lambda \ne 1$}
\put(90,30){\circle*{1}} \put(90,10){\circle*{1}} 
\put(70,10){\circle*{1}} \put(110,10){\circle*{1}} 
\multiput(92,28)(6,-6){3}{\line(1,-1){4}}
\multiput(88,28)(-6,-6){3}{\line(-1,-1){4}}
\put(65,10){\line(1,0){50}}
\put(92,29){$\langle b \rangle$}
\put(83.5,6){$\langle v+\mu w \rangle$}
\put(68,6){$\langle v \rangle$}
\put(107.5,6){$\langle w \rangle$}
\multiput(90,12)(0,6){3}{\line(0,1){3}}
\put(76,0){Case 2(b): $\lambda= 1$}
\end{picture}
\end{center}
Since $b' \in \Leib(L')$,  for suitable choice of $v', w'$, we have $v'b' = b'$ and $w'b'=b'$.  But this implies that $(v'-w')$ and $b'$ generate a 2-dimensional Lie algebra, contrary to the lattice information.  Therefore Case 2 is impossible.

\textit{Case 3:} $\dim(\Leib(L)) = 1$.  Take $x \notin \langle b,v \rangle$.  If $x^2 \ne 0$ but $x^3 = 0$, then $\phi(\langle x^2 \rangle )\subseteq \Leib(L')$ contrary to assumption.  Therefore either $x^2 = 0$ or $\langle x, v\rangle$ is the diamond algebra.  In either case, there exists $c = x + \lambda v$ with $c^2 = 0$.  Either $cv=v$ or $cv=0$.  Since $v' \notin \Leib(L')$, if $cv = v$, then $c' \in \Leib(L')$ and $\dim(\Leib(L')) >1$, contrary to Case 2 applied to $\phi^{-1}$.  So $cv=0$.  But this implies that  $x^2 = xv= 0$.  Therefore, for every $x \notin \langle b,v \rangle$, we have $xv=0$.  But $xv=0$ and $(x+b)v = 0$ implies $bv=0$ contrary to assumption.  Thus Case 3 also is impossible.
\end{proof} 
\end{theorem}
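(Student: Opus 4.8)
The plan is a proof by contradiction: I assume $\phi(\Leib(L)) \ne \Leib(L')$ and extract from this a very rigid low-dimensional configuration, which I then show is incompatible with $\dim L \ge 3$. The first, and in my view the crucial, step is the reduction to a single bad configuration. Since $\Leib(L)$ is the subspace spanned by the squares, it equals the join in $\lat(L)$ of the one-dimensional subalgebras $\langle a^2\rangle$, $a \in L$; as $\phi$ preserves joins, a failure of $\phi(\Leib(L)) = \Leib(L')$ forces some $\phi(\langle a^2\rangle) \not\subseteq \Leib(L')$, the symmetric possibility for $\phi^{-1}$ being handled identically. Then $a^2 \ne 0$, and Lemmas~\ref{nilp1gen}, \ref{lem-schain}, \ref{lem-kchains} applied to the one-generator subalgebra $\langle a\rangle$ rule out $\langle a\rangle$ being nilpotent, a single-chain algebra of dimension $>2$, or a multi-chain algebra, since in each of those cases $\phi$ carries $\Leib(\langle a\rangle) \supseteq \langle a^2\rangle$ into $\Leib(\langle a\rangle') \subseteq \Leib(L')$. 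Hence $\langle a\rangle$ is a diamond algebra; its image is again a diamond, in which the Leibniz kernel must be the image of the proper subalgebra other than $\phi(\langle a^2\rangle)$. After the normalisation available for diamond algebras this yields $b,v \in L$ with $\langle b,v\rangle$ a diamond, $b^2 = v^2 = vb = 0$, $bv = v$, and $\langle b\rangle' \subseteq \Leib(L')$. It remains to show that this cannot occur when $\dim L \ge 3$.

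The engine driving all the contradictions is that $b' \in \Leib(L')$ permits me to choose, for a diamond partner of $\langle b'\rangle$, a generator $v'$ with $v'b' = b'$, while $\Leib(L')$ being an ideal with $L'/\Leib(L')$ a Lie algebra tightly controls all remaining products involving $v'$; comparing these facts produces a contradiction inside a subalgebra of dimension at most $3$. I would organise this by the value of $\dim\Leib(L)$. If there is $x \in L$ with $x^2 = xb = bx = xv = vx = 0$, then every $\langle b+\lambda x, v\rangle$ is a diamond, which forces $x' \in \Leib(L')$ and $v'(b'+x') = b'$, contradicting $\langle b'+x', v'\rangle$ being a diamond. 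If $\dim\Leib(L) > 1$, a second element $w$ of $\Leib(L)$ either produces such an $x$ or forces $bw = \lambda w$ with $\lambda \ne 0$, whence $\langle b, v+\mu w\rangle$ is $3$-dimensional when $\lambda \ne 1$ and a diamond when $\lambda = 1$; choosing $v', w'$ with $v'b' = w'b' = b'$ then makes $\langle v'-w', b'\rangle$ a $2$-dimensional Lie algebra absent from the lattice. Finally, if $\dim\Leib(L) = 1$, then since $\dim L \ge 3$ there is $x \notin \langle b,v\rangle$; either $x^2 = 0$ or $\langle x,v\rangle$ is a diamond, so some $c = x + \lambda v$ satisfies $c^2 = 0$, and $cv = v$ would push $\dim\Leib(L') > 1$, contradicting the previous case applied to $\phi^{-1}$; hence $cv = 0$, so $xv = 0$ for every $x \notin \langle b,v\rangle$, and $xv = 0 = (x+b)v$ gives $bv = 0$, a contradiction.

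I expect the main obstacle to lie in the reduction step rather than in the individual case computations: one must be certain that the lattice alone pins down which of the two proper subalgebras of the image diamond is its Leibniz kernel, and that no one-generator subalgebra other than a diamond can be the source of the failure — which is exactly what Lemmas~\ref{nilp1gen}, \ref{lem-schain}, \ref{lem-kchains} are arranged to guarantee. A further delicate point, in the case $\dim\Leib(L) > 1$, is verifying that the dichotomy $\lambda = 1$ versus $\lambda \ne 1$ is exhaustive and really gives the claimed $2$- or $3$-dimensional subalgebras; once those geometric pictures are correct, the contradiction from $v'b' = w'b' = b'$ is immediate.
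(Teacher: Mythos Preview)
Your plan is correct and follows the paper's approach essentially step for step: the reduction to a diamond $\langle b,v\rangle$ with $\langle b\rangle' \subseteq \Leib(L')$ via Lemmas~\ref{nilp1gen}, \ref{lem-schain}, \ref{lem-kchains} is exactly what the paper sets up in the paragraph preceding the theorem, and your three cases (the auxiliary $x$ commuting with everything, $\dim\Leib(L)>1$, and $\dim\Leib(L)=1$) mirror the paper's Cases~1--3 with the same contradictions. The only small omission in your Case~3 sketch is that before the dichotomy ``$x^2=0$ or $\langle x,v\rangle$ is a diamond'' one must separately exclude $x^2\ne 0$, $x^3=0$ (where $\alg x\rangle$ is nilpotent, forcing $\phi(\langle v\rangle)=\phi(\langle x^2\rangle)\subseteq\Leib(L')$), which the paper does explicitly.
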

\bibliographystyle{amsplain}

\begin{thebibliography}{10}
\bibitem{LIsos} D. W. Barnes, \textit{Lattice isomorphisms of Lie algebras},
J. Austral. Math. Soc.  \textbf{4} (1964), 470--475.

\bibitem{Lautos} D. W. Barnes, \textit{Lattice automorphisms of semi-simple Lie
algebras}, J. Austral. Math. Soc.  \textbf{16} (1973), 43--53.

\bibitem{Sch-Leib} D. W. Barnes, \textit{Schunck classes of soluble Leibniz algebras,} arXiv:1101.3046 (2011)

\bibitem{Goto} M. Goto, \textit{Lattices of subalgebras of real Lie algebras}, J. Algebra \textbf{11} (1969) 6--24.

\bibitem{T-dimpres} D. A. Towers, \textit{Dimension preserving lattice isomorphisms of Lie algebras}, Comm. Algebra \textbf{8} (1980), 1371--1386.

\bibitem{T-Lautos} D. A. Towers, \textit{Lattice automorphisms of Lie algebras}, Arch. Math. (Basel) \textbf{46} (1986), 39--43.

\bibitem{smod} D. A. Towers, \textit{On semi-modular subalgebras of Lie algebras over fields of arbitrary characteristic} Asian-Eur. J. Math. \textbf{1} (2008), 283--294.

\end{thebibliography}

\end{document}